\documentclass[reqno]{amsart}
\usepackage[utf8]{inputenc}
\usepackage{amsmath}
\usepackage[left=2.7cm,right=2.7cm,top=3.5cm,bottom=3cm]{geometry}
\usepackage{xfrac}
\usepackage{graphicx}
\usepackage{amsfonts, amssymb}
\usepackage{tikz-cd}
\usetikzlibrary{arrows,arrows.meta}
\tikzcdset{arrow style=tikz,
	diagrams={>={Stealth[round,length=4pt,width=6pt,inset=3pt]}}}
\usepackage{amsthm}
\usepackage[T1]{fontenc}
\usepackage{xfrac}
\usepackage{color}
\usepackage[
    colorlinks = true,
    linkcolor = black,
    urlcolor  = black,
    citecolor = blue,
    bookmarksopen = true,
    backref = page
]{hyperref}

\newtheorem{Theorem}{Theorem}[section]
\newtheorem{Proposition}[Theorem]{Proposition}

\newtheorem{Lemma}[Theorem]{Lemma}

\newtheorem{Corollary}[Theorem]{Corollary}

\newtheorem{Remark}[Theorem]{Remark}

\providecommand{\customgenericname}{}
\newcommand{\newcustomtheorem}[2]{%
	\newenvironment{#1}[1]
	{%
		\renewcommand\customgenericname{#2}%
		\renewcommand\theinnercustomgeneric{##1}%
		\innercustomgeneric
	}
	{\endinnercustomgeneric}
}

\newcustomtheorem{customthm}{Theorem}

\newcustomtheorem{customcor}{Corollary}

\begin{document}

\setcounter{tocdepth}{1}

\title{The fundamental group of surfaces parametrizing cuboids}
\author{Benjamin Enriquez}
\address{Institut de Recherche Math\'ematique Avanc\'ee, Universit\'e de Strasbourg, France}
\email{b.enriquez@math.unistra.fr}

\author{David Jarossay}
\address{De Vinci Higher Education, De Vinci Research Center, Paris, France}
\email{david.jarossay@devinci.fr}

\author{Francesco Maria Saettone}
\address{ Department of Mathematics, Weizmann Institute of Science, Israel}
\email{francesco.saettone@weizmann.ac.il}

\author{ Yotam Svoray}
\address{Department of Mathematics, University of Utah, Utah}
\email{svoray@math.utah.edu}

\begin{abstract}
We prove that an irreducible projective complete intersection of dimension at
least two with isolated singularities has trivial fundamental group. As an
application, the surface $\Upsilon$ parametrizing cuboids and its minimal
resolution of singularities are simply connected. By an independent argument we also show that the surface $V$ parametrizing face cuboids and its resolution are simply connected as well. We then introduce two smooth open subvarieties $S_{1}$ and $S_{2}$ of the surface parametrizing face cuboids, show that each has fundamental group isomorphic to $\mathbb F_{3}\ltimes \mathbb Z^{2}$, and prove that their Malcev completions reduce to the free pro-unipotent group on three generators. In an appendix we treat the corresponding real loci, whose
fundamental groups, in contrast, are far from trivial.
\end{abstract}

\maketitle

\tableofcontents

\section{Introduction}

\subsection{Cuboids and face cuboids}
 
A \emph{cuboid} is a rectangular box. It is described by seven quantities: the
three edge lengths $A,B,C$, the three face diagonals $X,Y,Z$, and the space
diagonal $U$. Naturally, these are not independent, but satisfy
the four relations
\begin{equation}\label{eq:cuboids}
\begin{aligned}
A^{2}+B^{2}-Z^{2} &= 0,\\
B^{2}+C^{2}-X^{2} &= 0,\\
C^{2}+A^{2}-Y^{2} &= 0,\\
A^{2}+X^{2}-U^{2} &= 0.
\end{aligned}
\end{equation}
A \emph{perfect cuboid} is one all of whose seven lengths are integers. Whether
a perfect cuboid exists is a venerable open problem, going back to Euler, which
has so far resisted several investigations: no
example is known, and none has been ruled out.
 
It is natural to approach this question geometrically, by assembling all cuboids
into a single algebraic variety. Following van~Luijk~\cite{VL}, the
\emph{surface parametrizing cuboids} (also known as the \emph{box variety}
in~\cite{FSM}) is the closed subvariety
\[
\Upsilon\subset \mathbb P^{6}_{\mathbb C}
\]
cut out by the equations~\eqref{eq:cuboids} in the homogeneous coordinates
$A,B,C,X,Y,Z,U$. Since these equations are homogeneous, every point of
$\Upsilon(\mathbb Q)$ acquires integral homogeneous coordinates after clearing
denominators, so that
$\Upsilon(\mathbb Z)=\Upsilon(\mathbb Q)$,
and the integral points of $\Upsilon$ are precisely the perfect cuboids. In
particular, a perfect cuboid exists if and only if
$\Upsilon(\mathbb Q)\neq\emptyset$.
 
As shown by van~Luijk, $\Upsilon$ is a complete intersection with exactly $48$
complex singular points, all ordinary double points. We write
$\mathrm{Sing}(\Upsilon)$ for this singular locus and
$\Upsilon^{\mathrm{sm}}:=\Upsilon\smallsetminus \mathrm{Sing}(\Upsilon)$ for the
smooth locus, and we denote by $\widetilde{\Upsilon}$ the minimal resolution of
singularities, a smooth projective surface.
 
Relaxing the integrality condition slightly leads to a second, closely related
surface. A \emph{face cuboid} is a box whose lengths are all integers except
possibly one of the three face diagonals $X,Y,Z$. The associated moduli space is
the \emph{surface parametrizing face cuboids}, the subvariety
\[
V\subset \mathbb P^{5}_{\mathbb Q}
\]
defined in coordinates $A,B,C,X,Y,U$ by
\begin{equation}\label{eq:face_cuboids}
\begin{aligned}
A^{2}+C^{2}-Y^{2} &= 0,\\
B^{2}+C^{2}-X^{2} &= 0,\\
A^{2}+X^{2}-U^{2} &= 0,
\end{aligned}
\end{equation}
whose integral points are the face cuboids. In contrast to perfect cuboids, face
cuboids exist in abundance (see~\cite{VL} and the references therein). First
studied by Beukers and van~Geemen~\cite{BvG}, the surface $V$ is again a complete
intersection, now with $16$ ordinary double points~\cite[Prop.~4.1, p.~51]{VL}.
It carries a more transparent structure: it is the quotient of a product
of two elliptic curves by an involution, and its minimal resolution
$\widetilde V$ is the associated Kummer surface.

\subsection{Motivation}
 
Our interest in these surfaces stems from the prospect of studying the arithmetic
of cuboids through fundamental groups. The Chabauty--Kim method, introduced
in~\cite{CK}, controls the rational points of a hyperbolic curve by means of its
unipotent fundamental group. A fully developed analogue for surfaces is not yet available (with the  partial exception of~\cite{JDC}) but the same philosophy suggests that the
unipotent fundamental groups attached to $\Upsilon$, to $V$, and to suitable
open subvarieties should carry meaningful arithmetic content. The present note
is a first, initial step in this direction.

\subsection{Main results}
 
It is a classical result that a \emph{smooth} projective irreducible complete intersection of complex
dimension at least $2$ is simply connected; see~\cite[Ch.~IX, \S4.1]{ShafarevichBAG2}.
In the present work we extend this to the isolated singularities setting, which applies well beyond the cuboid surfaces and may be
of independent interest.
 
\begin{customthm}{A}\label{thm:A}
Let $n\geq 2$ and $m>n$, and let $X\subset \mathbb P^{m}_{\mathbb C}$ be an
irreducible projective complete intersection of dimension $n$ with isolated
singularities. Then $X(\mathbb C)$  has trivial fundamental group.
\end{customthm}
 
The hypothesis $n\geq 2$ is essential, since a singular complete intersection
curve may well have nontrivial fundamental group. Our proof, inspired
by~\cite[Theorem~2.1]{dimca}, proceeds by analysing a general hyperplane section
$W=X\cap H$. By a result of Dimca~\cite{dimca}, refining earlier work of
Looijenga~\cite{loo}, the complement $X(\mathbb C)\smallsetminus W(\mathbb C)$
has the homotopy type of a bouquet of $n$-spheres, hence is simply connected when
$n\geq 2$. A Seifert--van Kampen argument, comparing this complement with a
tubular neighborhood of $W$, then forces $\pi_1(X(\mathbb C))$ to vanish.
 
The surface $\Upsilon$ is an irreducible projective complete intersection with
only ordinary double points, so Theorem~\ref{thm:A} applies and yields
$\pi_1(\Upsilon(\mathbb C))=0$; a companion argument shows that $\widetilde{\Upsilon}(\mathbb C)$ is simply connected as well. The face cuboid surface $V$ admits a quite different and more elementary treatment: from its
description as the quotient of a product of two elliptic curves by an involution,
the triviality of $\pi_1(V(\mathbb C))$ follows from a classical theorem of
Armstrong~\cite{arm} on fundamental groups of orbit spaces, and that of
$\pi_1(\widetilde V(\mathbb C))$ from Spanier's computation of the homology of
Kummer manifolds~\cite{spanier}. We thus obtain the following.

\begin{customcor}{A}\label{cor:B}
The complex analytic spaces $\Upsilon(\mathbb C)$, $\widetilde{\Upsilon}(\mathbb C)$,
$V(\mathbb C)$, and $\widetilde V(\mathbb C)$ have trivial fundamental groups.
\end{customcor}

The picture becomes richer once a divisor is removed. Inside $\widetilde V$ we
single out two smooth open surfaces $S_{1},S_{2}$, each the complement of a normal
crossings divisor, which fiber over a four-punctured sphere with fibers an
elliptic curve. The long exact homotopy sequence of these fibrations, made
explicit by a section, yields a semidirect product decomposition.
 
\begin{customthm}{B}\label{thm:C}
Let $\mathbb F_{3}$ denote the free group on three generators. Then both
$\pi_{1}\big(S_{1}(\mathbb C)\big)$ and $\pi_{1}\big(S_{2}(\mathbb C)\big)$ are
isomorphic to $\mathbb F_{3}\ltimes \mathbb Z^{2}$.
\end{customthm}
 
We then pass to the Malcev, or unipotent, completions of these
groups---the Betti realization of the unipotent fundamental group, and the
natural input for a Chabauty--Kim analysis. The monodromy of the fibration acts
on the elliptic fiber by $-1$, and we show that this collapses the entire fiber
upon completion: the $\mathbb Z^{2}$ disappears, leaving only the contribution of
the base. In symbols,
\[
\pi_{1}\big(S_{i}(\mathbb C)\big)^{\mathrm{un}}\;\simeq\;\mathbb F_{3}^{\mathrm{un}}
\qquad 
\text{for}
\quad 
i=1,2
\]
the free pro-unipotent group on three generators.
 
Finally, an appendix examines the real loci $\Upsilon(\mathbb R)$,
$\widetilde{\Upsilon}(\mathbb R)$, $V(\mathbb R)$, and $\widetilde V(\mathbb R)$.
In sharp contrast with the complex case, these are far from simply connected: we
describe each up to homeomorphism, as a connected sum of non-orientable surfaces
with the singular points contributing free factors to the fundamental group, and
we record a formula, following \cite{bruce}, expressing the relevant Euler
characteristics through real Milnor numbers.

\subsubsection*{Acknowledgments} 
D.J. thanks Ishai Dan-Cohen\footnote{In turn, Ishai Dan-Cohen wishes to thank Ambrus P\'al for introducing \textit{him} to this problem. To the best of our knowledge, the idea to approach the arithmetic of cuboids via fundamental group techniques originates from him.} for introducing him to this problem and Netan Dogra for correspondence. Y.S. thanks Brennan Richardson for his help with computations over $\mathbb{R}$. We also thank an anonymous referee for useful comments.

F.M.S. was supported by the ERC, SharpOS, 101087910, by the ISF grants  2067/23 and 1963/20, and the BSF grant 2018250.

\section{Triviality of fundamental groups}

\subsection{Computation of the fundamental groups}

\begin{Theorem}\label{p:ci-variety-pi1}
Let $n\geq 2$, let $m>n$, and let
$X\subset \mathbb P^m_{\mathbb C}$ be an irreducible projective complete
intersection of dimension $n$ with isolated singularities. Then
\[
\pi_1\big(X(\mathbb C)\big)=0.
\]
\end{Theorem}

\begin{proof}
Let $H\subset \mathbb P^m_{\mathbb C}$ be a general hyperplane. We first
explain what general means.
By Bertini's theorem \cite[Lemma~33.47.3]{stacks-project}, applied to the
smooth quasi-projective variety
\[
X^{\mathrm{sm}}\subset \mathbb P^m_{\mathbb C},
\]
the hyperplane $H$ meets $X^{\mathrm{sm}}$ transversally. Hence
$X\cap H$ is smooth at every point of $X^{\mathrm{sm}}\cap H$.

Moreover, since $X$ has only finitely many isolated singular points, we may
also choose $H$ so that
\[
H\cap \mathrm{Sing}(X)=\emptyset .
\]
Indeed, the set of hyperplanes containing a fixed point of $\mathrm{Sing}(X)$
is a proper closed subset of the dual projective space
\[
(\mathbb P^m_{\mathbb C})^\vee
\]
and there are only finitely many such points. Thus the hyperplanes avoiding
$\mathrm{Sing}(X)$ form a nonempty Zariski open subset of
$(\mathbb P^m_{\mathbb C})^\vee$.

We next verify the hypotheses of Bertini irreducibility
\cite[Tag~0G4F]{stacks-project}. Set
\[
\mathcal L:=\mathcal O_X(1),\qquad
V:=H^0(\mathbb P^m_{\mathbb C},\mathcal O(1)),
\]
and let
\[
V\rightarrow H^0(X,\mathcal L)
\]
be the restriction map.

First, $X$ is of finite type over $\mathbb C$, since it is closed in
$\mathbb P^m_{\mathbb C}$. Moreover, $X$ is irreducible by assumption; since
$\mathbb C$ is algebraically closed, it is geometrically irreducible.

We next verify the third hypothesis. Choose a smooth point $p\in X$, which
exists because $\mathrm{Sing}(X)$ is finite and $\dim X=n$. Let $\mathcal I_p$ denote the ideal sheaf of the point $p$ in $\mathbb P^m_{\mathbb C}$, and denote $\mathcal I_p(1):=\mathcal I_p\otimes \mathcal O_{\mathbb P^m_{\mathbb C}}(1)$. 

Let $\mathfrak m_{\mathbb P,p}\subset \mathcal O_{\mathbb P^m_{\mathbb C},p}$ and
$\mathfrak m_{X,p}\subset \mathcal O_{X,p}$
be the maximal ideals at $p$. We claim that the natural map
\[
H^0\big(\mathbb P^m_{\mathbb C},\mathcal I_p(1)\big)
\rightarrow
\mathfrak m_{X,p}/\mathfrak m_{X,p}^2
\]
is surjective. This map is defined as follows. A section of
$\mathcal I_p(1)$ is a hyperplane section vanishing at $p$. After choosing
coordinates with $p=[1:0:\cdots:0]$, such a section is given by a linear form
\[
\ell=a_1x_1+\cdots+a_mx_m .
\]
On the affine chart $x_0\neq 0$, its local equation is
$\frac{\ell}{x_0}=a_1t_1+\cdots+a_mt_m$,for $t_i:=x_i/x_0$. Restricting this local equation to $X$ gives an element of
$\mathfrak m_{X,p}$, and the map sends $\ell\mapsto
\dfrac{\ell}{x_0}|_X\in \mathfrak m_{X,p}/\mathfrak m_{X,p}^2$.
The classes of $t_1,\ldots,t_m$ generate
$\mathfrak m_{\mathbb P,p}/\mathfrak m_{\mathbb P,p}^2$.
Since $X\hookrightarrow \mathbb P^m_{\mathbb C}$ is a closed immersion, the
local homomorphism
$\mathcal O_{\mathbb P^m,p}\twoheadrightarrow \mathcal O_{X,p}$
is surjective, and therefore induces a surjection
\[
\mathfrak m_{\mathbb P,p}/\mathfrak m_{\mathbb P,p}^2
\twoheadrightarrow
\mathfrak m_{X,p}/\mathfrak m_{X,p}^2\ .
\]
Hence the classes of the restricted hyperplane equations generate
$\mathfrak m_{X,p}/\mathfrak m_{X,p}^2$. 
Hence we may choose two hyperplanes $H_2,H_3$ through $p$
whose local equations restrict to two linearly independent elements of
$\mathfrak m_{X,p}/\mathfrak m_{X,p}^2$.
Since $p$ is a smooth point of $X$, the local ring $\mathcal O_{X,p}$ is regular
of dimension $n$. Thus these two local equations form part of a regular system
of parameters of $\mathcal O_{X,p}$. It follows that the local codimension of
\[
X\cap H_2\cap H_3
\]
at $p$ is $2$. Let $Z$ be an irreducible component of this intersection passing
through $p$. Then
\[
\operatorname{codim}_X(Z)=2.
\]
Choose a hyperplane $H_1$ not containing $p$; then $Z\not\subset H_1$.

Finally, the complete linear system of hyperplanes has empty base locus on $X$,
i.e.,
\[
\bigcap_{H\subset\mathbb P^m_{\mathbb C}}(X\cap H)=\emptyset.
\]
Thus the fourth hypothesis of \cite[Tag~0G4F]{stacks-project} is also satisfied.
It follows that $X\cap H$ is irreducible for a general hyperplane $H$.

We now choose a hyperplane $H$ satisfying the following three open conditions:
\[
H \text{ meets } X^{\mathrm{sm}} \text{ transversally},\qquad
H\cap \mathrm{Sing}(X)=\emptyset,\qquad
X\cap H \text{ is irreducible}.
\]
This is possible because the first two conditions define nonempty Zariski open subsets of $(\mathbb P^m_{\mathbb C})^\vee$, and the third is the nonempty
Zariski open condition obtained from Bertini irreducibility. 
\\
We now set
\[
W:=X\cap H.
\]
Then $W$ is smooth and irreducible, hence path-connected. Set
\[
U:=X(\mathbb C)\smallsetminus W(\mathbb C),
\]
and let $N$ be a sufficiently small open tubular neighborhood of
$W(\mathbb C)$ in $X(\mathbb C)$, contained in
$X^{\mathrm{sm}}(\mathbb C)$. This is possible because $W$ is compact and
disjoint from $\mathrm{Sing}(X)$. Since $N$ deformation retracts onto the path-connected space $W(\mathbb C)$, it is path-connected. Indeed, irreducible topological spaces are connected \cite[Tag~004U]{stacks-project}, and connectedness is preserved under analytification. Since $W$ is
smooth over $\mathbb C$, we have that $W(\mathbb C)$ is a complex manifold, hence locally
path-connected. Therefore connectedness implies path-connectedness.

Now, $X$ is a projective complete intersection of complex dimension $n$
with isolated singularities, and $W=X\cap H$ is smooth. Therefore, by
\cite[\S2, pp.~324--325]{dimca}, the space $U$ has the homotopy type of a
bouquet of $n$-spheres. Since $n\ge 2$, it follows that  $U$ is path-connected and
\[
\pi_1(U)=0.
\]

We now show that $U\cap N$ is path-connected. Since $N$ is a tubular
neighborhood of the smooth divisor
\[
W(\mathbb C)\subset X^{\mathrm{sm}}(\mathbb C),
\]
we have
\[
U\cap N=N\smallsetminus W(\mathbb C).
\]
The space $N\smallsetminus W(\mathbb C)$ deformation retracts onto the
$S^1$-bundle of the normal line bundle $\mathcal N_{W/X^{\mathrm{sm}}}$:
\[
S^1\rightarrow S(\mathcal N_{W/X^{\mathrm{sm}}})\rightarrow W(\mathbb C).
\]
Since both $S^1$ and $W(\mathbb C)$ are path-connected, the total space
$S(\mathcal N_{W/X^{\mathrm{sm}}})$, and therefore $U\cap N$, is
path-connected.

Fix a base point $x_0\in U\cap N$, which we omit from the notation. Let
\[
j_1\colon \pi_1(U)\rightarrow \pi_1(X(\mathbb C))
\qquad
\text{and}
\qquad
j_2\colon \pi_1(N)\rightarrow \pi_1(X(\mathbb C))
\]
be induced by the inclusion maps. Since $U$, $N$, and $U\cap N$ are
path-connected, Seifert--van Kampen gives the following pushout diagram:
\begin{equation}\label{seifert-general-ci}
\begin{tikzcd}
    \pi_1(U\cap N)\arrow{r}{i_1} \arrow{d}{i_2}
    & \pi_1(U) \arrow{d}{j_1} \\
    \pi_1(N) \arrow{r}{j_2}
    & \pi_1(X(\mathbb C))
    \simeq \pi_1(N)*_{\pi_1(U\cap N)}\pi_1(U).
\end{tikzcd}
\end{equation}

It remains to show that
\[
i_2\colon \pi_1(U\cap N)\rightarrow \pi_1(N)
\]
is surjective. Since $U\cap N$ deformation retracts onto the $S^1$-bundle
$S(\mathcal N_{W/X^{\mathrm{sm}}})$, the fibration
\[
S^1\rightarrow S(\mathcal N_{W/X^{\mathrm{sm}}})\rightarrow W(\mathbb C)
\]
gives, from the long exact sequence of homotopy groups,
\[
\pi_1(S(\mathcal N_{W/X^{\mathrm{sm}}}))\rightarrow \pi_1(W(\mathbb C))
\rightarrow \pi_0(S^1)=0.
\]
Thus the map
\[
\pi_1(S(\mathcal N_{W/X^{\mathrm{sm}}}))\rightarrow \pi_1(W(\mathbb C))
\]
is surjective. Using the deformation retractions
\[
U\cap N\simeq S(\mathcal N_{W/X^{\mathrm{sm}}})
\qquad\text{and}\qquad
N\simeq W(\mathbb C),
\]
and this yields the surjectivity of $i_2$.

Since $\pi_1(U)=0$, the pushout description \eqref{seifert-general-ci}
becomes
\[
\pi_1(X(\mathbb C))
\simeq
\pi_1(N)*_{\pi_1(U\cap N)}\{0\}.
\]
By definition of the amalgamated product, this is obtained from $\pi_1(N)$ by
imposing the relations
\[
i_2(\gamma)=i_1(\gamma)
\]
for all $\gamma\in\pi_1(U\cap N)$. But $i_1(\gamma)=1$, since
$\pi_1(U)=0$. Hence
\[
\pi_1(X(\mathbb C))
\simeq
\sfrac{\pi_1(N)}
{i_2\bigl(\pi_1(U\cap N)\bigr)^{\mathcal N}},
\]
where $i_2(\pi_1(U\cap N))^{\mathcal N}$ denotes the normal closure of
$i_2(\pi_1(U\cap N))$ in $\pi_1(N)$.
Since $i_2$ is surjective, we have
\[
i_2\bigl(\pi_1(U\cap N)\bigr)=\pi_1(N).
\]
Therefore its normal closure is all of $\pi_1(N)$, and so
$\pi_1(X(\mathbb C))=0$.
\end{proof}

\begin{Corollary}
    The fundamental group of $\Upsilon(\mathbb C)$ is trivial.
\end{Corollary}

\begin{proof}
By \cite[Lemma~3.2.1]{VL}, applied with $K=\mathbb C$, the ideal defining $\Upsilon_{\mathbb C}$ is prime. Hence $\Upsilon_{\mathbb C}$ is irreducible.
Moreover, $\Upsilon_{\mathbb C}\subset \mathbb P^6_{\mathbb C}$ is a projective complete intersection surface. By \cite[Lemma~3.2.9]{VL}, its singularities are ordinary double points; in particular, they are isolated. Therefore Theorem~\ref{p:ci-variety-pi1}, applied with $n=2$ and $m=6$, gives
$\pi_1\big(\Upsilon(\mathbb C)\big)=0$.
\end{proof}

\begin{Corollary}\label{t:Upsilon-tilde}
The fundamental group of $\widetilde{\Upsilon}(\mathbb C)$ is trivial.
\end{Corollary}

\begin{proof}
By \cite[Lemma~3.2.9]{VL}, the singularities of $\Upsilon_{\mathbb C}$ are
ordinary double points. In particular, they are quotient singularities. By
\cite[Definition~3.2.10 and Corollary~3.2.11]{VL},
$\widetilde{\Upsilon}_{\mathbb C}$ is the blow-up of $\Upsilon_{\mathbb C}$ at
its $48$ singular points, and this blow-up is nonsingular.
Therefore 
\cite[Theorem~7.8.1]{KollarShafarevich} gives
\[
\pi_1\big(\widetilde{\Upsilon}(\mathbb C)\big)
\simeq
\pi_1\big(\Upsilon(\mathbb C)\big)
\]
where the right-hand side is trivial by Theorem~\ref{p:ci-variety-pi1}.
\end{proof}

\begin{Remark}
{\em van Luijk showed (\cite{VL}, Proposition 3.2.19) that the Hodge diamond of $\widetilde{\Upsilon}(\mathbb{C})$ is of the form 
$$ \begin{array}{c} 1 
\\ q \text{ }\text{ }\text{ }\text{ }\text{ }q 
\\ 7+q\text{ }\text{ }\text{ } h^{1,1}\text{ }\text{ }\text{ } 7+q 
\\ q \text{ }\text{ }\text{ }\text{ }\text{ } q 
\\ 1 
\end{array} $$
He then conjectured that $q=0$ (\cite{VL}, Bluff 1). Later, Stoll and Testa proved that $q=0$ and $h^{1,1}= 64$ in \cite[p.4]{ST}. Here, the fact that $H_1(\widetilde{\Upsilon}(\mathbb{C}))=0$ implies in particular that  $h^{0,1}=h^{1,0}=0$, which recovers the fact that $q=0$.}
\end{Remark}

\section{Fundamental groups of two surfaces}

It is somehow easier to work with $V$ than with $\Upsilon$. Since we have a surjective map $\Upsilon(K) \rightarrow V(\mathbb{Q})$ where $K$ is the extension of $\mathbb{Q}$ generated by square roots of integers, in order to study perfect cuboids it would be sufficient to understand the rational points of $V$.

\subsection{A description of $V$ via elliptic curves}

We review basic properties of $V$ following \cite[pp.52-53]{VL}.
\\Consider the following elliptic curve
\[
E:\;\; y^2z=x^3-4xz^2
\]
and the torsion point $T = [0:0:1]$. 

We have that $E \times E$ is an abelian surface with two automorphisms of order $2$, namely $$\iota\colon (P,Q) \mapsto (-P,-Q),\;\;\text{and}\;\;\gamma\colon (P,Q) \mapsto (P + T, Q + T).$$

The map $\Phi : E \times E \rightarrow V$ given by 
$$ \left\{ \begin{array}{l} A = y_{1}^{2}y_{2}^{2} - 16 x_{1}^{2}x_{2}^{2} 
\\ B = 4 ( y_{1}^{2} x_{2}^{2} - y_{2}^{2} x_{1}^{2})
\\ C = 8 x_{1}x_{2}y_{1}y_{2}
\\ X = 4( y_{1}^{2}x_{2}^{2}+y_{2}^{2}x_{1}^{2})
\\ Y = y_{1}^{2}y_{2}^{2} + 16 x_{1}^{2}x_{2}^{2} 
\\ U = (y_{1}^{2}+8 x_{1}z_{1}) (y_{2}^{2}+8 x_{2}z_{2})
\end{array} 
\right. $$
induces an isomorphism 
$$ (E \times E) / \langle \iota,\gamma\rangle \simeq V.$$
 
Let also the elliptic curve $E' = E / \langle \tau \rangle$ where $\tau$ is the involution $P \mapsto P+ T$. We have
$$ E':\;\; y^2z=x^3+xz^2. $$
By the complex uniformization, $E$ and $E'$ correspond to lattices $\Lambda_1$ and $\Lambda_2$, respectively. Define $\tau' = \mathrm{id} \times \tau$ as the automorphism of $E \times E$, so that we have $E \times E' = (E\times E)/ \langle \tau' \rangle$

Let $\alpha\colon (P,Q) \mapsto (P+Q,Q)$ the automorphism of $E\times E$. We have $\alpha^{-1} \langle \iota,\gamma\rangle \alpha = \langle \tau',\iota \rangle$. Hence 
$$ V \simeq (E \times E) / \langle \tau',\iota \rangle. $$

Let $\mathrm{inv}\colon(P,Q) \mapsto (-P,-Q)$ the automorphism of $E \times E'$. We have 
\begin{equation*}
\begin{aligned}
&V \simeq (E \times E')  / \langle \iota,\gamma \rangle \simeq (E\times E')/ \langle\mathrm{inv}\rangle.
\end{aligned}
\end{equation*}
In particular, $V$ is the singular Kummer quotient of $E\times E'$, and its minimal resolution of singularities $\widetilde{V}$ its Kummer surface. The 16 singular points of $V$ correspond to the $2$-torsion points of $V$, i.e., the elements of $E[2] \times E'[2]$.

\begin{Proposition}\label{prop V}
    The fundamental groups of  $\pi_{1}(V(\mathbb{C}))$ and $\pi_1(\widetilde{V}(\mathbb{C}))$ are both trivial.
\end{Proposition} 
\begin{proof} 
By the above description of $V$ we immediately have
\[
V(\mathbb{C}) \simeq \mathbb{C}^{2}/G,
\]
where $\Lambda:=\Lambda_1\times\Lambda_2$ and $G:=\Lambda\rtimes \{\pm 1\}$ 
acts on $\mathbb{C}^{2}$ by $(\lambda,\varepsilon)\cdot z=\varepsilon z+\lambda$,  for $\lambda\in\Lambda$, and $\varepsilon\in\{\pm1\}$.
We now apply Armstrong's theorem \cite{arm}, which yields
\[
\pi_{1}\big(V(\mathbb{C})\big)\cong G/N
\]
where $N$ is the normal subgroup of $G$ generated by the elements of $G$ having a fixed point on $\mathbb{C}^{2}$. For every $\lambda\in\Lambda$, the element $(\lambda,-1)\in G$ has a fixed point: indeed, $(\lambda,-1)\cdot z=z$ is equivalent to
$2z=\lambda$, which has the solution $z=\lambda/2\in\mathbb{C}^{2}$. Hence
$(\lambda,-1)\in N$ for all $\lambda\in\Lambda$.
In particular, $(0,-1)\in N$. Therefore, for every $\lambda\in\Lambda$, we have $(\lambda,1)=(\lambda,-1)(0,-1)\in N$.
Thus all elements $(\lambda,1)$ and $(\lambda,-1)$ belong to $N$, which implies that  $N=G$.Hence $\pi_1(V(\mathbb{C}))=0$.

Lastly,  the triviality of $\pi_1(\widetilde{V}(\mathbb{C}))$ follows directly from \cite{spanier}.
\end{proof}

\subsubsection{Two open smooth subvarieties of $V$}

We now define the following complement of a normal crossings divisor inside a projective smooth variety
\begin{equation}
\begin{aligned}
& S_{1}= \big(E \smallsetminus E[2]\big) \times E' / \mathrm{inv}
 \\
& S_{2}= E \times \big( E' \smallsetminus E'[2]\big) / \mathrm{inv}.
\end{aligned}
\end{equation}

The open surfaces $S_1$ and $S_2$ identify with the corresponding open subsets of the resolution $\widetilde V$.
More precisely, if $\widetilde V \to V$ is the minimal resolution, then the preimage of $S_i\subset V$
is isomorphic to $S_i$ and can be written as $
S_i \;=\; \widetilde V \smallsetminus D_i$,
where $D_i$ is a normal crossings divisor.

Indeed, the boundary $D_i$ is obtained as follows: one removes from $V$ the images of the relevant components
coming from $E[2]$ (respectively $E'[2]$), and then resolves the resulting singularities by blowing up finitely many
points lying on the corresponding elliptic curves. Each blow-up replaces such a point by an exceptional $\mathbb{P}^1$ meeting the strict
transform of the curve transversely. Consequently, the total boundary divisor has only normal crossings.

\subsection{Fundamental groups}

We now compute the topological fundamental groups of the smooth open surfaces $S_1$ and $S_2$.

\begin{Theorem}\label{t:fund_group_S_1}
The topological fundamental groups  $\pi_{1}(S_{1})$ and 
 $\pi_{1}(S_{2})$ are both isomorphic to $\mathbb F_{3} \ltimes \mathbb{Z}^{2}$.
\end{Theorem}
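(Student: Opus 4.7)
The plan is to exhibit $S_1(\mathbb{C})$ as the total space of a locally trivial fiber bundle over a four-punctured sphere, with fiber $E'(\mathbb{C})$, and then to run the homotopy long exact sequence. By the symmetry exchanging the two factors, the same argument with the roles of $E$ and $E'$ swapped will compute $\pi_1(S_2(\mathbb{C}))$, so I focus on $S_1$.

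The key observation is that $E[2]$ is exactly the fixed locus of $P \mapsto -P$ on $E$, so $\mathrm{inv}$ acts freely on $(E-E[2])(\mathbb{C}) \times E'(\mathbb{C})$. Using the $\mathrm{inv}$-equivariance of the first projection (with $\mathrm{inv}$ acting as $-1$ on the target), I would descend to a morphism
$$\varphi \colon S_1(\mathbb{C}) \longrightarrow (E - E[2])(\mathbb{C})/\{\pm 1\} \cong \mathbb{P}^1(\mathbb{C}) \setminus \{4 \text{ points}\},$$
whose fiber over a class $[P]$ is the $\mathrm{inv}$-orbit set $\{(P,Q) : Q \in E'\} \cong E'(\mathbb{C})$. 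Local triviality of $\varphi$ would follow from lifting small open subsets of the base along the \'etale double cover $(E-E[2]) \to (E-E[2])/\{\pm 1\}$: over such a lift $U$, the preimage in $S_1$ identifies with $U \times E'(\mathbb{C})$ via $(P,Q) \mapsto ([P],Q)$.

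With this bundle structure established, the homotopy long exact sequence degenerates. The base is homotopy equivalent to a wedge of three circles, so $\pi_2 = 0$ and $\pi_1 = \mathbb{F}_3$; together with $\pi_1(E'(\mathbb{C})) = \mathbb{Z}^2$ and the connectedness of the fiber, this yields the short exact sequence
$$1 \longrightarrow \mathbb{Z}^2 \longrightarrow \pi_1(S_1(\mathbb{C})) \longrightarrow \mathbb{F}_3 \longrightarrow 1.$$
Since $\mathbb{F}_3$ is free, the sequence splits, producing the required isomorphism $\pi_1(S_1(\mathbb{C})) \cong \mathbb{F}_3 \ltimes \mathbb{Z}^2$.

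The main technical step is the verification of local triviality of $\varphi$, which rests crucially on the freeness of the $\mathrm{inv}$-action; once this is granted, everything else is a direct application of the fibration long exact sequence. If needed, the monodromy action $\mathbb{F}_3 \to \mathrm{Aut}(\mathbb{Z}^2)$ can be pinned down by noting that a small loop around any puncture in the base lifts in the double cover $(E-E[2]) \to (E-E[2])/\{\pm 1\}$ to a path from some $P$ to $-P$, so the corresponding monodromy on $E'$ is inherited from $\mathrm{inv}$ and acts as $-\mathrm{Id}$ on $\mathbb{Z}^2$.
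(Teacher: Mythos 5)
Your proposal is correct and follows essentially the same route as the paper: descend one of the projections to a fiber bundle over the four-punctured $\mathbb{P}^1$ with elliptic-curve fiber, and read off the extension $1 \to \mathbb{Z}^2 \to \pi_1(S_1(\mathbb{C})) \to \mathbb{F}_3 \to 1$ from the homotopy long exact sequence. The one genuine difference is in the splitting step: you invoke the general fact that any surjection onto a free group splits, whereas the paper writes down an explicit section $[Q] \mapsto [(O,Q)]$ through the origin of the elliptic curve, well-defined because $\mathrm{inv}(O,Q) = (O,-Q)$. Both are valid; the explicit section makes the semidirect-product structure and a choice of splitting concrete, while your argument is shorter and does not require producing one. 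Your added remarks --- verifying local triviality by lifting through the \'etale double cover, and identifying the monodromy action of each puncture loop as $-\mathrm{Id}$ on $\mathbb{Z}^2$ --- are accurate and make explicit two points the paper leaves implicit.
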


\begin{proof} We present the proof for the $S_2$ case and the proof for $S_1$ is virtually identical. We  denote by $\mathrm{inv}$ the map $P \mapsto -P$ on $E'$. We have a fibration as follows :
\begin{equation} \label{eq:fib}  
p\colon S_{2} = (E \times ( E' - E'[2])) / \mathrm{inv} \rightarrow ( E' - E'[2]) / \mathrm{inv}\ ,\qquad (P,Q) \mapsto Q.
\end{equation}
The base space of (\ref{eq:fib}) is isomorphic to $\mathbb{P}^{1} \smallsetminus \{0,1,\lambda ,\infty\}$ where $\lambda \in \mathbb{C}\smallsetminus\{0,1\}$. Indeed, we have an isomorphism $E'(\mathbb{C}) / \mathrm{inv} \simeq \mathbb{P}^{1}(\mathbb{C})$. Whence, since $E'[2]$ is the set of fixed points of $\mathrm{inv}$, we have an isomorphism of the following form with $\lambda \in \mathbb{C}\smallsetminus\{0,1\}$ :
$$( E' - E'[2]) / \mathrm{inv} \simeq \mathbb{P}^{1} \smallsetminus \{0,1,\lambda ,\infty\}.$$

The fiber of (\ref{eq:fib}) is isomorphic to $E$. Indeed, the preimage of a class $\pm Q_{0}$ is the set of classes of elements of the form $(P,\pm Q_{0})$, and we have $\mathrm{inv}(P,Q_{0})$ and $(-P,-Q_{0})$.

Thus, the fibration (\ref{eq:fib}) induces the following long exact sequence of homotopy
\begin{multline*}
\cdots \rightarrow \pi_2(\mathbb{P}^1\smallsetminus\{0,1,\lambda,\infty\})\rightarrow \pi_1(E(\mathbb{C}))\rightarrow \pi_1(S_{2})\rightarrow
\pi_1(\mathbb{P}^1\smallsetminus\{0,1,\lambda,\infty\})\rightarrow \pi_{0}(E(\mathbb{C})) \rightarrow \cdots
\end{multline*}

Moreover, it is well-known that
\[
\pi_{2}(\mathbb{P}^{1}(\mathbb{C}) \smallsetminus \{0,1,\lambda,\infty\})=1,\;\;\pi_{1}(E(\mathbb{C})) = \mathbb{Z}^{2},\;\;\pi_{1}(\mathbb{P}^{1}(\mathbb{C}) \smallsetminus \{0,1,\lambda,\infty\})= \mathbb F_{3}.
\]
As $E(\mathbb C)$ is a torus, we immediately have $\pi_{0}(E(\mathbb{C}))=1$.
Thus, the long exact sequence yields the following short exact sequence:
\begin{equation} \label{eq:short-exact-sequence}
    1 \rightarrow \mathbb{Z}^{2}  \rightarrow\pi_{1}(S_{2})\rightarrow \mathbb F_3 \rightarrow 1.
\end{equation}

Finally, the fibration has a section $[Q] \mapsto [(0, Q)]$, which is well-defined as $\mathrm{inv}((0,Q))=(0,-Q)$. Thus, the short exact sequence (\ref{eq:short-exact-sequence}) is split.
\end{proof}

\begin{Remark}{\em 
    The above exact sequence \eqref{eq:short-exact-sequence} is split anyway since its last term in a free group, but here we are giving an explicit splitting.}
\end{Remark}

\subsection{Malcev completion}

For an introduction to Malcev completion, we refer to \cite{hain}; see also
\cite{zehavi} for a recent use of relative Malcev completion in
Chabauty--Kim theory. The Malcev completion of the topological fundamental
group gives the Betti realization of the unipotent fundamental group. We will
show that, for the two surfaces $S_1$ and $S_2$, the Malcev completion kills
the $\mathbb Z^2$-fiber and retains only the contribution coming from the
base of the fibration.

We recall that if $\Gamma$ is a discrete group, its Malcev completion over
$\mathbb Q$ is a pro-unipotent algebraic group $\Gamma^{\mathrm{un}}$,
together with a homomorphism
\[
\iota_\Gamma\colon \Gamma\rightarrow \Gamma^{\mathrm{un}}(\mathbb Q),
\]
satisfying the following universal property: for every pro-unipotent
algebraic group $U$ over $\mathbb Q$, and every group homomorphism
$f\colon \Gamma\rightarrow U(\mathbb Q)$, there exists a unique morphism
of pro-unipotent algebraic groups
\[
\widetilde f\colon \Gamma^{\mathrm{un}}\rightarrow U
\]
such that $f=\widetilde f\circ \iota_\Gamma$. This is the unipotent case
of relative completion, obtained by taking the reductive quotient to be
trivial; see \cite[\S3]{hain-modular}.

Let us first record the monodromy action. We do this for $S_2$; the case of
$S_1$ is identical, with $E$ and $E'$ interchanged. Let
\[
B:=(E'\smallsetminus E'[2])/\{\pm 1\}
\simeq
\mathbb P^1\smallsetminus\{0,1,\lambda,\infty\},
\]
and fix $b=[Q_0]\in B$. Via
$P\mapsto [(P,Q_0)]$
we identify the fiber $p^{-1}(b)$ of \eqref{eq:fib} with $E$. A small
loop in $B$ around any puncture has non-trivial monodromy for the double
cover
\[
E'\smallsetminus E'[2]\rightarrow B;
\]
hence it sends $Q_0$ to $-Q_0$. Therefore it sends $[(P,Q_0)]\mapsto
[(P,-Q_0)]=
[(-P,Q_0)]$.
Thus each standard generator of
$\pi_1(B)\simeq \mathbb F_3$
acts on
$\pi_1(E)\simeq \mathbb Z^2$
by multiplication by $-1$.

Consequently, for $i=1,2$, using the splitting from
Theorem~\ref{t:fund_group_S_1}, we may write
\[
\pi_1(S_i)\simeq \mathbb Z^2\rtimes \mathbb F_3
\]
where the normal subgroup is $\mathbb Z^2$. If $x,y$ generate this
normal subgroup and $g_1,g_2,g_3$ are free generators of $\mathbb F_3$,
then
\begin{equation}\label{e:monodromy-relations}
g_jxg_j^{-1}=x^{-1}
\qquad
\text{and}
\qquad
g_jyg_j^{-1}=y^{-1}
\end{equation}
for $j=1,2,3$.

\begin{Proposition}\label{p:malcev}
The Malcev completions of $\pi_1(S_1)$ and $\pi_1(S_2)$ over
$\mathbb Q$ are isomorphic to the pro-unipotent completion of the free group in three
generators:
\[
\pi_1(S_1)^{\mathrm{un},B}
\simeq
\mathbb F_3^{\mathrm{un}}
\simeq
\pi_1(S_2)^{\mathrm{un},B}.
\]
\end{Proposition}

\begin{proof}
We give the proof for $S_2$; the proof for $S_1$ is the same. Set
\[
\Gamma:=\pi_1(S_2)\simeq \mathbb Z^2\rtimes \mathbb F_3\ .
\]
Let $x,y$ generate the normal subgroup $\mathbb Z^2$, and let
$g_1,g_2,g_3$ be free generators of $\mathbb F_3$. By
\eqref{e:monodromy-relations}, we have
$g_jxg_j^{-1}=x^{-1}$ 
and $g_jyg_j^{-1}=y^{-1}$.

By right exactness of Malcev completion \cite[Prop.~3.6]{hain-modular},
applied to
\[
1\rightarrow \mathbb Z^2
\rightarrow \Gamma
\rightarrow \mathbb F_3
\rightarrow 1
\]
we obtain a right exact sequence of pro-unipotent algebraic groups
\[
(\mathbb Z^2)^{\mathrm{un}}
\rightarrow
\Gamma^{\mathrm{un}}
\rightarrow
\mathbb F_3^{\mathrm{un}}
\rightarrow 1.
\]
Since
$(\mathbb Z^2)^{\mathrm{un}}\simeq \mathbb G_a^2$
this becomes
\begin{equation}\label{e:right-exact-malcev}
\mathbb G_a^2
\rightarrow
\Gamma^{\mathrm{un}}
\rightarrow
\mathbb F_3^{\mathrm{un}}
\rightarrow 1.
\end{equation}
It remains to show that the first morphism is trivial.

We use the following elementary observation. Let $U$ be a pro-unipotent group
over $\mathbb Q$, and let $a,h\in U(\mathbb Q)$ satisfy
\[
hah^{-1}=a^{-1}.
\]
Then $a=1$. Indeed, it is enough to check this after passing to every
finite-dimensional unipotent quotient of $U$. In such a quotient, write
\[
a=\exp(X)
\qquad
\text{and}
\qquad
h=\exp(H)
\]
with $X,U$ in $\mathrm{Lie}(U)$. The relation $hah^{-1}=a^{-1}$ gives
$\operatorname{Ad}(h)(X)=-X$. On the other hand
$\operatorname{Ad}(h)=\exp(\operatorname{ad}(H))$
is a unipotent operator. Hence all its eigenvalues are equal to $1$, so it
has no eigenvalue $-1$. Therefore $X=0$, and hence $a=1$.

We apply this observation to the canonical completion map
$\iota_\Gamma\colon
\Gamma\rightarrow \Gamma^{\mathrm{un}}(\mathbb Q)$.
Using the relations \eqref{e:monodromy-relations}, we obtain
\[
\iota_\Gamma(x)=1
\qquad
\text{and}
\qquad
\iota_\Gamma(y)=1.
\]
Thus the composite
\[
\mathbb Z^2
\hookrightarrow
\Gamma
\xrightarrow{\iota_\Gamma}
\Gamma^{\mathrm{un}}(\mathbb Q)
\]
is trivial. By the universal property of the unipotent completion of
$\mathbb Z^2$, we obtain that the induced morphism
$\mathbb G_a^2=(\mathbb Z^2)^{\mathrm{un}}
\rightarrow
\Gamma^{\mathrm{un}}$ is therefore trivial.

Hence the cokernel of the first morphism in \eqref{e:right-exact-malcev} is $\Gamma^{\mathrm{un}}$ itself. Since \eqref{e:right-exact-malcev} is right exact, it follows that
$\Gamma^{\mathrm{un}}
\simeq
\mathbb F_3^{\mathrm{un}}$.
\end{proof}


\appendix 

\section{On the fundamental groups of \texorpdfstring{$\mathbb R$}{R}-points} 
\label{sec:pi_Real}

We now discuss the analogues of the computations of Section~2 over $\mathbb{R}$. We begin with a few observations about $\Upsilon(\mathbb{R})$: 

\begin{enumerate} 

\item[$\bullet$]  From the proof of~\cite[Corollary 3.2.3]{VL}, over the real numbers, $\Upsilon(\mathbb{R})$ has 24 singularities which are all ordinary double points.

\item[$\bullet$] By looking at the defining equations of $\Upsilon(\mathbb{R})$, (viewed as a projective variety) over the real numbers we see that we must have that $U\not=0$, as otherwise, all variables must equal to zero. Thus we will view $\Upsilon(\mathbb{R})$ as a compact affine variety by putting $U=1$. The last equation becomes $A^2+B^2+C^2=1$, from this we can conclude that $\Upsilon(\mathbb{R})$ is compact, as it is bounded in $\mathbb{R}^{6}$.
\end{enumerate}

We use these observations to compute the  fundamental group of $\Upsilon(\mathbb{R})$ using a few key results.

The following Lemma shows that the real locus of $\Upsilon$ is path connected. This can be easily noticed since $\Upsilon(\mathbb{R})$ is an irreducible real variety and therefore connected, but we add a more detailed proof for the sake of completeness.

\begin{Lemma}\label{l:UpsilonR-pathconn}
The real locus $\Upsilon(\mathbb R)$ is path connected.
\end{Lemma}

\begin{proof}
First note that $\Upsilon(\mathbb R)$ has no points with $U=0$. Indeed, from the last equation in
\eqref{eq:cuboids} we get $A^2+X^2=0$, hence $A=X=0$, and then the remaining equations force
$B=C=Y=Z=0$, which is impossible in projective space. Thus $\Upsilon(\mathbb R)$ is contained in the affine chart $U=1$. On  $U=1$, we recall that the equations \eqref{eq:cuboids} become
\[
X^{2}=B^{2}+C^{2},\qquad
Y^{2}=C^{2}+A^{2},\qquad
Z^{2}=A^{2}+B^{2},\qquad
A^{2}+X^{2}=1,
\]
and using $X^{2}=B^{2}+C^{2}$ the last equation is equivalent to $A^{2}+B^{2}+C^{2}=1$.
Set
\[
S^{2}:=\{(A,B,C)\in\mathbb R^{3}:A^{2}+B^{2}+C^{2}=1\}.
\]

For each $\varepsilon=(\varepsilon_X,\varepsilon_Y,\varepsilon_Z)\in\{\pm1\}^{3}$, define $\Upsilon_{\varepsilon}(\mathbb R)$ as the following locus
\[
\Bigl\{(A,B,C,X,Y,Z)\in \Upsilon(\mathbb R):
X=\varepsilon_X\sqrt{B^{2}+C^{2}},\ 
Y=\varepsilon_Y\sqrt{C^{2}+A^{2}},\
Z=\varepsilon_Z\sqrt{A^{2}+B^{2}}
\Bigr\}.
\]
Then each $\Upsilon_{\varepsilon}(\mathbb R)$ is the image of the continuous map
\[
S^{2}\rightarrow \Upsilon(\mathbb R)\qquad
(A,B,C)\mapsto \bigl(A,B,C,\varepsilon_X\sqrt{B^{2}+C^{2}},\varepsilon_Y\sqrt{C^{2}+A^{2}},\varepsilon_Z\sqrt{A^{2}+B^{2}}\bigr),
\]
so $\Upsilon_{\varepsilon}(\mathbb R)$ is path connected, since it is homeomorphic to $S^{2}$.
Moreover,
\[
\Upsilon(\mathbb R)=\bigcup_{\varepsilon\in\{\pm1\}^{3}} \Upsilon_{\varepsilon}(\mathbb R).
\]

We now show that the union is path connected by checking that the sheets intersect in a connected pattern.
If $\varepsilon$ and $\varepsilon'$ differ only in the $X$--sign (i.e.\ $\varepsilon_Y=\varepsilon'_Y$ and
$\varepsilon_Z=\varepsilon'_Z$), then $\Upsilon_{\varepsilon}(\mathbb R)\cap \Upsilon_{\varepsilon'}(\mathbb R)\neq\emptyset$:
indeed, at $(A,B,C)=(1,0,0)\in S^{2}$ we have $X=0$, $Y=1$, $Z=1$, hence the point
$(1,0,0,0,\varepsilon_Y,\varepsilon_Z)$ lies in both sheets. Similarly, if $\varepsilon$ and $\varepsilon'$ differ only
in the $Y$--sign, they intersect at $(A,B,C)=(0,1,0)$, and if they differ only in the $Z$--sign, they intersect at
$(A,B,C)=(0,0,1)$.

Thus, whenever $\varepsilon$ and $\varepsilon'$ differ in exactly one coordinate, the intersection
$\Upsilon_{\varepsilon}(\mathbb R)\cap \Upsilon_{\varepsilon'}(\mathbb R)$ is nonempty. The graph on $\{\pm1\}^{3}$ in which two sign
vectors are adjacent if they differ in one coordinate is connected, as it consists of the $3$-dimensional cube. It follows that the union
$\bigcup_{\varepsilon}\Upsilon_{\varepsilon}(\mathbb R)=\Upsilon(\mathbb R)$ is path connected.
\end{proof}

\begin{Lemma}
    There exists a compact differential surface $M$ such that $$\pi_1(\Upsilon(\mathbb{R})) \simeq \pi_1(M) * \mathbb F_{24}$$ where $\mathbb F_{24}$ is the free group on $24$ generators. In particular, $\Upsilon(\mathbb{R})$ is not simply connected.    
\end{Lemma}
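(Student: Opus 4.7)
The plan is to realize $\Upsilon(\mathbb{R})$ as the quotient of a compact smooth $2$-manifold $M$ by identifying $24$ pairs of distinct points, and to conclude via iteration of the elementary fact that, for a path-connected CW-complex $X$ with two distinct points $a,b$, the quotient $X/(a\sim b)$ is homotopy equivalent to $X$ with an arc from $a$ to $b$ attached; hence Van Kampen yields $\pi_{1}(X/(a\sim b))\cong\pi_{1}(X)*\mathbb{Z}$.

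The first step is a local real-analytic analysis at each of the $24$ singular points $p_{1},\dots,p_{24}$ of $\Upsilon(\mathbb{R})$. Each is an ordinary double point, so its analytic local model is $\{q(x,y,z)=0\}\subset\mathbb{R}^{3}$ for some nondegenerate real quadratic form $q$. What one needs is that $q$ has \emph{indefinite} signature at every $p_{i}$, so that locally $\Upsilon(\mathbb{R})$ is diffeomorphic to $\{x^{2}+y^{2}-z^{2}=0\}$, i.e.\ a real double cone (two closed disks glued at an interior point, with link $S^{1}\sqcup S^{1}$). This is the step that deserves the most care, because the defining equations~\eqref{eq:cuboids} only guarantee an ordinary double point over $\mathbb{C}$: a definite real local signature would yield an isolated real point with no contribution to $\pi_{1}$, and it is precisely the indefiniteness at all $24$ points that pins the free rank down to $24$.

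Next, I would construct $M$ by separating the two sheets at each $p_{i}$: replace $p_{i}$ with two distinct points $p_{i}^{+},p_{i}^{-}$, one on each sheet. Then $M$ is a compact differentiable $2$-manifold equipped with a natural surjection $\varphi\colon M\to\Upsilon(\mathbb{R})$ that is a homeomorphism away from the $48$ marked points and identifies $\{p_{i}^{+},p_{i}^{-}\}$ with $p_{i}$ for each $i$. Thus $\Upsilon(\mathbb{R})\cong M/{\sim}$, where $\sim$ is generated by $p_{i}^{+}\sim p_{i}^{-}$ for $i=1,\dots,24$. Provided $M$ is connected---as one expects given that $\Upsilon(\mathbb{R})$ itself is connected by the first bullet of \S\ref{sec:pi_Real}---applying the elementary fact above $24$ times gives the desired decomposition $\pi_{1}(\Upsilon(\mathbb{R}))\cong\pi_{1}(M)*\mathbb{F}_{24}$; in the disconnected case, one first performs the component-merging identifications and absorbs them into the definition of a single connected surface before invoking the fact on the remaining identifications. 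The \emph{in particular} assertion is then immediate, since a free product of groups is trivial only when both factors are trivial, and $\mathbb{F}_{24}\neq 1$.
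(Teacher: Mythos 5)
Your argument is essentially the paper's: locally model each of the $24$ real nodes as $\{x^{2}+y^{2}-z^{2}=0\}$ via Morse's lemma, separate the two sheets at each node to build a compact surface $M$ mapping onto $\Upsilon(\mathbb{R})$ with $24$ pairs of points identified, and then observe that each in-component identification contributes a $\ast\,\mathbb{Z}$ by Van Kampen, giving $\pi_1(M)\ast\mathbb{F}_{24}$.

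Two remarks. First, you are right to flag the signature issue: the paper simply asserts the local model $x^{2}+y^{2}-z^{2}$, which over $\mathbb{R}$ is not automatic from ``ordinary double point'' (a definite form would give an isolated real point and contribute nothing to $\pi_{1}$); this is in fact what is extracted from the cited computation in van Luijk's Corollary~3.2.3, so it is a reference rather than a new verification, but your caution is warranted. Second, your workaround for the potentially disconnected case does not actually work: if $M$ had components $M_{1},\dots,M_{c}$, the ``component-merging'' identifications produce a wedge $M_{1}\vee\dots\vee M_{c}$, which is \emph{not} a surface, so you cannot ``absorb them into the definition of a single connected surface.'' Moreover the resulting free rank would drop to $24-(c-1)$, so the stated conclusion $\pi_1(M)\ast\mathbb{F}_{24}$ with $M$ a compact surface would not follow. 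The correct remedy is to verify that the smooth locus $\Upsilon(\mathbb{R})\smallsetminus\mathrm{Sing}$ is connected (equivalently, that $M$ is connected); the paper's proof also silently assumes this, so you have identified a genuine soft spot, but your fallback argument for when it fails is not valid.
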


\begin{proof}
    We will construct $M$ in the following way. Let $p \in \Upsilon(\mathbb{R})$ be an ordinary double point. By Morse's lemma, there exists a neighborhood $p \in U \subset \Upsilon$ such that $U$ is homeomorphic to $$V(x^2+y^2-z^2) \cap \text{Ball}_{1}(0) \subset \mathbb{R}^3.$$ Therefore, $U \smallsetminus \{p\}$ is homeomorphic to an open cone from whom we have removed the origin, which itself is homeomorphic to the intersection of a ball with two disjoint cylinders. Therefore, by gluing two disks on either sides of $\Upsilon(\mathbb{R}) \smallsetminus \{p\}$, we have replaced $p$ by a pair of smooth points, and by performing this to every singular point of $\Upsilon(\mathbb{R})$, we constructed a compact differential surface $M$. Now, we can view $\Upsilon(\mathbb{R})$ as $M$ in which we identified $2 \cdot 24 = 48$ points. Yet, identifying a pair of points on a topological surface is the same as attaching a closed CW $1$-cell to it. Therefore, $\Upsilon(\mathbb{R})$ is homeomorphic to the wedge product of $M$ with $24$ circles, and so using Van-Kampen's Theorem, we conclude that $$\pi_1(\Upsilon(\mathbb{R})) = \pi_1(M) * \pi_1\left(\bigvee^{24} S^1\right) = \pi_1(M) * \mathbb F_{24}.$$ 
\end{proof}

\begin{Lemma}
    Let $\widetilde{\Upsilon}(\mathbb{R})$ be the minimal resolution of singularities of $\Upsilon(\mathbb{R})$ together with the resolution map $\pi\colon \widetilde{\Upsilon}\rightarrow \Upsilon$. Then $\widetilde{\Upsilon}(\mathbb{R})$ is homeomorphic to the connected sum of $M$ with $\Sigma_{24}$, where $\Sigma_{24}$ is the unique orientable surface of genus $24$. 
\end{Lemma}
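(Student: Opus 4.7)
The strategy is to exploit that the resolution map $\pi$ is an isomorphism away from the $24$ ordinary double points of $\Upsilon(\mathbb{R})$, so that both $M$ and $\widetilde{\Upsilon}(\mathbb{R})$ coincide with $\Upsilon(\mathbb{R})$ on the smooth locus and differ only in how they fill in neighborhoods of the $24$ singularities. The proof therefore reduces to a local analysis at one singular point, followed by a global surgery argument.

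For the local step, I would invoke Morse's Lemma to replace a neighborhood of an ODP $p \in \Upsilon(\mathbb{R})$ by the standard affine cone $\{x^{2}+y^{2}-z^{2}=0\} \subset \mathbb{R}^{3}$, whose intersection with a small ball is the familiar ``hourglass''. I would then compute the strict transform of this cone under the blow-up of $\mathbb{R}^{3}$ at the origin. In the chart of $\mathrm{Bl}_{0}\mathbb{R}^{3}$ with coordinates $(u,v,z)$, $x = uz$, $y = vz$, the cone equation pulls back to $z^{2}(u^{2}+v^{2}-1)=0$, so the strict transform is the smooth cylinder $\{u^{2}+v^{2}=1\} \cong S^{1} \times \mathbb{R}$ and meets the exceptional $\mathbb{RP}^{2}$ transversely in the circle $\{u^{2}+v^{2}=1, z=0\}$, which is the real locus of the exceptional $\mathbb{P}^{1}_{\mathbb{C}}$ of the minimal resolution. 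The explicit parametrization $(r,\theta) \mapsto ((r\cos\theta, r\sin\theta, r), [\cos\theta : \sin\theta : 1])$ identifies this local piece of $\widetilde{\Upsilon}(\mathbb{R})$ with a cylinder $S^{1} \times (-\varepsilon,\varepsilon)$, whose central circle $r=0$ is the exceptional circle and whose two halves $r>0$ and $r<0$ correspond to the upper and lower cones of the hourglass.

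With this picture in hand, the comparison becomes immediate. By the construction in the preceding lemma, $M$ replaces each hourglass by two disjoint closed disks, whereas $\widetilde{\Upsilon}(\mathbb{R})$ replaces it by a cylinder joining the two cones through the exceptional circle. Consequently, $\widetilde{\Upsilon}(\mathbb{R})$ is obtained from $M$ by removing, at each of the $24$ singular points, the two capping disks and gluing in an annulus along their boundary circles, which is precisely the attachment of a $1$-handle. Each such orientation-preserving $1$-handle attachment is equivalent to taking connected sum with a torus $T^{2}$, so performing it $24$ times yields $M \# T^{2} \# \cdots \# T^{2} \cong M \# \Sigma_{24}$. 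The main obstacle is ensuring that every handle is attached compatibly with a chosen ambient orientation, so that the resulting summand is genuinely $T^{2}$ and not a Klein bottle; this can be verified by comparing the orientations induced on the two boundary circles $r = \pm\varepsilon$ by the parametrization above with the orientations induced on the boundaries of the two capping disks from the smooth locus of $\Upsilon(\mathbb{R})$. Once this orientation check is performed, the conclusion $\widetilde{\Upsilon}(\mathbb{R}) \cong M \# \Sigma_{24}$ follows.
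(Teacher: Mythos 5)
Your proof takes essentially the same route as the paper's: Morse's lemma reduces the question to the local cone $\{x^{2}+y^{2}-z^{2}=0\}$, the strict transform under the blow-up at the origin is identified with a cylinder $S^{1}\times(-\varepsilon,\varepsilon)$ meeting the exceptional locus in a circle, and globally one recognizes that passing from $M$ to $\widetilde{\Upsilon}(\mathbb{R})$ is a $1$-handle attachment at each of the $24$ double points. Where you go beyond the paper is in actually carrying out the chart computation of the strict transform (the paper simply asserts that the blow-up of the cone is homeomorphic to $\mathbb{R}\times\mathbb{P}^{1}_{\mathbb{R}}$), and in explicitly flagging the orientation question at the end. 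That question — whether each handle contributes a torus summand rather than a Klein-bottle summand, i.e.\ whether one truly obtains $M\#\Sigma_{24}$ rather than $M$ with $24$ cross-handles — is a genuine loose end, and it is left unresolved in your write-up just as it is left unaddressed in the paper. The distinction is ultimately harmless once the non-orientability of $\widetilde{\Upsilon}(\mathbb{R})$ (hence of $M$) is in hand, since $M\#T^{2}\cong M\#K$ for non-orientable $M$; but for a self-contained proof of the lemma exactly as stated, the orientation check you sketch should either be completed or the conclusion should be weakened to ``$\widetilde{\Upsilon}(\mathbb{R})$ is obtained from $M$ by attaching $24$ one-handles,'' deferring the torus-versus-Klein-bottle determination to after the non-orientability argument.
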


\begin{proof}
    Since the only singularities of $\Upsilon(\mathbb{R})$ are ordinary double points, then $\widetilde{\Upsilon}(\mathbb{R})$ is constructed by blowing up each singularity of $\Upsilon(\mathbb{R})$ once. We also note that $\widetilde{\Upsilon}(\mathbb{R})$ is path connected since $\Upsilon$ is irreducible, and therefore so is $\widetilde{\Upsilon}(\mathbb{R})$. Let $p \in \Upsilon(\mathbb{R})$ be an ordinary double point, and so from Morse's lemma (see, for example, Lemma 2.2 in~\cite{Milnor}) there exists a neighborhood $p \in U \subset \Upsilon(\mathbb{R})$ such that $U$ is homeomorphic to 
    $$V(x^2+y^2-z^2) \cap \text{Ball}_{1}(0) \subset \mathbb{R}^3.$$
    Now, the blow up $V(x^2+y^2-z^2)$ at the origin is homeomorphic to the $\mathbb{R} \times \mathbb{P}_{\mathbb{R}}^1$ as the exceptional divisor is isomorphic to $\mathbb{P}_{\mathbb{R}}^1$, and blowing up $\Upsilon(\mathbb{R})$ at $p$ is a homeomorphism outside $U$. But locally outside $p$ we have that $M \smallsetminus \{p\}$ is homeomorphic to $\widetilde{\Upsilon}(\mathbb{R}) \smallsetminus \pi^{-1}(p)$. Thus, topologically, the blow up of $\Upsilon(\mathbb{R})$ at $p$ replaces the CW $1$-cell which connects the two points  corresponding to $x$ in $M$ by a copy of $\mathbb{R} \times \mathbb{P}^1_{\mathbb{R}} = \mathbb{R} \times S^1$. Therefore, it corresponds to the connect sum of $M$ with a torus. By performing this over all $24$ singular points, the result follows. 
\end{proof}

\begin{Lemma}
    $\widetilde{\Upsilon}(\mathbb{R})$ is non orientable. 
\end{Lemma}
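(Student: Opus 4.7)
The plan is to leverage the description of $\widetilde{\Upsilon}(\mathbb{R})$ from the previous lemma as the connected sum $M \# \Sigma_{24}$: since $\Sigma_{24}$ is orientable, $\widetilde{\Upsilon}(\mathbb{R})$ is non-orientable if and only if $M$ is, so the task reduces to showing that $M$ is non-orientable. I would describe $M$ explicitly via the $8$-fold branched cover $\Upsilon(\mathbb{R}) \cap \{U=1\} \to S^2 \subset \mathbb{R}^3$, parameterized by sign choices $(\epsilon_X,\epsilon_Y,\epsilon_Z) \in \{\pm 1\}^3$ with $X = \epsilon_X \sqrt{1-A^2}$, $Y = \epsilon_Y \sqrt{1-B^2}$, $Z = \epsilon_Z \sqrt{1-C^2}$, where the six branch points $(\pm 1,0,0), (0,\pm 1,0), (0,0,\pm 1)$ of $S^2$ each lie beneath $4$ ODPs of $\Upsilon(\mathbb{R})$.

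I would then track a global orientation across the $24$ cylindrical handles introduced by the resolution. In local coordinates $(b,c,X)$ near an ODP with cone equation $X^2 = b^2 + c^2$, the real blow-up chart $b = t\cos\phi$, $c = t\sin\phi$, $X = t$ satisfies $db \wedge dc = t\,dt \wedge d\phi$, so the sign relating the ``outward $S^2$'' orientation on each sheet to the cylinder orientation $dt \wedge d\phi$ depends on $\mathrm{sign}(t)$. This sets up a $\mathbb{Z}/2$-valued cocycle on the graph $G$ whose vertices are the $8$ sheets and whose edges are the $24$ handles, with $\widetilde{\Upsilon}(\mathbb{R})$ orientable exactly when this cocycle is a coboundary. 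I would identify an odd-parity cycle in $G$, which would exhibit an orientation-reversing loop in $\widetilde{\Upsilon}(\mathbb{R})$, hence a M\"obius band, hence non-orientability.

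The main obstacle is locating this cycle: because the underlying cube graph on the $8$ vertices is bipartite, the naive compatibility constraint at each single handle admits a consistent $2$-coloring, so any non-trivial obstruction must come from the finer behavior of the local orientation charts on $S^2$, whose conventions relative to the outward normal flip between antipodal branch points. A natural candidate for an odd-parity loop is one that traverses two handles above antipodal branch points (say $(\pm 1,0,0)$), together with appropriate connecting paths on the two intermediate sheets; verifying the parity requires a careful computation of parallel transport of a local frame. As an alternative, one could invoke the principle that for a smooth real projective surface $X$ the real locus $X(\mathbb{R})$ is orientable if and only if the canonical class $K_X$ admits a real square root in $\mathrm{Pic}(X)$, then combine the adjunction formula $K_{\widetilde{\Upsilon}} = \pi^*\mathcal{O}(1)|_\Upsilon$ (valid since $\Upsilon$ is a complete intersection of four quadrics in $\mathbb{P}^6$ and the resolution at ODPs is crepant) with the known description of $\mathrm{Pic}(\widetilde{\Upsilon})$.
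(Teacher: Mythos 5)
Your proposal departs entirely from the paper's argument, which is a short cohomological induction: the paper factors $\widetilde{\Upsilon}(\mathbb{R}) \to \Upsilon(\mathbb{R})$ into a sequence of single blow-ups $\Upsilon_N \to \cdots \to \Upsilon_0$ and uses the long exact sequence of each blow-up square (together with $H^1(S^1)=\mathbb{Z}$, $H^2(S^1)=0$) to show that $H^2(\Upsilon_i;\mathbb{Z}) \to H^2(\Upsilon_{i+1};\mathbb{Z})$ is surjective with nonzero target, and concludes inductively that $H^2(\widetilde{\Upsilon}(\mathbb{R});\mathbb{Z})$ cannot be torsion-free. You instead sketch two geometric/arithmetic routes and, by your own account, complete neither.

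The first route sets up a reasonable framework — eight sheets indexed by $\{\pm 1\}^3$, a multigraph of $24$ resolving tubes, and a $\mathbb{Z}/2$-valued cocycle measuring orientation-reversal across each tube — but it stops precisely where the work begins: you do not produce the odd-parity cycle. This is a genuine gap, and there is reason to think it cannot be filled in the form you describe: the Jacobian $db\wedge dc = t\,dt\wedge d\phi$ you write down shows that \emph{every} tube reverses orientation relative to the sheet orientations pulled back from the round $S^2$ (since the sign of $t$ flips across the exceptional circle), so your cocycle is the constant $1$ on every edge of the multigraph. On the bipartite cube graph — even with each edge doubled — a constant-$1$ edge-labelling is a coboundary (take $f(\epsilon)=\epsilon_X\epsilon_Y\epsilon_Z$), so the obstruction you aim to exhibit vanishes identically; in particular the antipodal digon you propose has even parity. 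You would need to identify a source of parity not captured by this cocycle, and your sketch gives no indication what it would be. The second route rests on a misstated criterion: the correct statement is that $w_1(X(\mathbb{R}))$ equals the image of $K_X$ under the map $\mathrm{Pic}(X)\to H^1(X(\mathbb{R});\mathbb{Z}/2)$, whose kernel is in general strictly larger than $2\,\mathrm{Pic}(X)$, so ``$K_X$ has no real square root'' does not imply non-orientability. Even with the criterion corrected, you defer the decisive computation to ``the known description of $\mathrm{Pic}(\widetilde{\Upsilon})$'' without carrying it out, so neither route yields a proof as written.
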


\begin{proof}
    Since $\widetilde{\Upsilon}(\mathbb{R})$ is the resolution of $\Upsilon(\mathbb{R})$, it is a compact differentiable surface. In addition, we can write the resolution as a series of blow ups at its ordinary double points $\Upsilon_N \to \Upsilon_{N-1} \to \cdots \to \Upsilon_1 \to \Upsilon_0 = \Upsilon$. We will prove that for every $i$ we have that $H^2(\Upsilon_i, \mathbb{Z})$ is not torsion free, and so the result follows from the cohomological description of orientability. Let $x_i \in \Upsilon_i$ be the point we blow up under the map $\pi_i \colon \Upsilon_{i+1} \to \Upsilon_i$ and let $E_{i+1} = \pi^{-1}(x_i)$. Then we have a long exact sequence 
    \begin{equation*}
        \cdots \to H^1(E_{i+1}, \mathbb{Z}) \to H^2(\Upsilon_i, \mathbb{Z}) \to H^2(\Upsilon_{i+1}, \mathbb{Z}) \oplus H^2(\{x_i\}, \mathbb{Z}) \to H^2(E_{i+1}, \mathbb{Z}) \to \cdots. 
    \end{equation*}
    Yet we have that $H^2(\{x_i\}, \mathbb{Z})=0$ and that $E_{i+1}$ is homeomorphic to $\mathbb{P}_{\mathbb{R}}^1$, and therefore $H^2(E_{i+1}, \mathbb{Z})=0$ and $H^1(E_{i+1}, \mathbb{Z})=\mathbb{Z}$. Thus the long exact sequence becomes
    $$ \cdots \to \mathbb{Z} \to H^2(\Upsilon_i, \mathbb{Z}) \to H^2(\Upsilon_{i+1}, \mathbb{Z}) \to 0.$$ Therefore the map $H^2(\Upsilon_i, \mathbb{Z}) \to H^2(\Upsilon_{i+1}, \mathbb{Z})$ is surjective. If $\Upsilon_i$ is non orientable then so is $\Upsilon_{i+1}$, and if $\Upsilon_i$ is orientable we have that $H^2(\Upsilon_i, \mathbb{Z})=\mathbb{Z}$,  thus $H^2(\Upsilon_{i+1}, \mathbb{Z})$ is the non-zero cokernel of a map $\mathbb{Z} \to \mathbb{Z}$, which can never be torsion free. Therefore $\Upsilon_{i+1}$ is never orientable for every $i$ and the result follows. 
\end{proof}

\begin{Corollary}
 \textup{$\widetilde{\Upsilon}(\mathbb{R})$ is homeomorphic to $N_k$ for some $k$.}
\end{Corollary}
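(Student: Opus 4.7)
The plan is to observe that this corollary is an immediate application of the classification theorem of closed surfaces, combined with the three lemmas established just above. The classification states that every closed (compact, without boundary) connected $2$-manifold is homeomorphic either to an orientable surface $\Sigma_g$ of genus $g\ge 0$, or to a non-orientable surface $N_k = \#_{i=1}^k \mathbb{RP}^2$ for some $k\ge 1$.

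The verification of the hypotheses proceeds as follows. First, I would note that $\widetilde{\Upsilon}(\mathbb{R})$ is a (topological, in fact differentiable) $2$-manifold: by construction it is the minimal resolution of $\Upsilon(\mathbb{R})$, whose only singularities are ordinary double points, and the preceding lemma describes it explicitly as a connected sum of the smooth surface $M$ with $\Sigma_{24}$, which is certainly a closed $2$-manifold. Second, $\widetilde{\Upsilon}(\mathbb{R})$ is compact: $\Upsilon(\mathbb{R})$ is compact by the third bullet of the introductory observations of this appendix, the resolution map $\pi\colon \widetilde{\Upsilon}(\mathbb{R})\to \Upsilon(\mathbb{R})$ is proper, and the continuous image/preimage description from the previous lemma makes this manifest. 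Third, $\widetilde{\Upsilon}(\mathbb{R})$ is connected, which was already recorded in the proof of the second lemma of this appendix. Finally, it is non-orientable by the immediately preceding lemma.

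Putting these four facts together, the classification theorem of closed surfaces forces $\widetilde{\Upsilon}(\mathbb{R})$ to be homeomorphic to $N_k$ for some $k\ge 1$, which is exactly the statement of the corollary. There is no real obstacle here, since every nontrivial content has already been established in the three lemmas above; the corollary is a formal consequence. If one wished to be more precise, one could in principle pin down the value of $k$ via the Euler characteristic relation $k = 2 - \chi(\widetilde{\Upsilon}(\mathbb{R}))$, computing $\chi$ either from the connected-sum description $\widetilde{\Upsilon}(\mathbb{R}) \cong M \,\#\, \Sigma_{24}$ (so that $\chi = \chi(M) + \chi(\Sigma_{24}) - 2 = \chi(M) - 46$) or by a direct CW-complex analysis of the $24$ blow-ups, but determining the genus of $M$ itself would be the main work, and this is not required for the stated corollary.
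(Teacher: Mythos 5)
Your argument is correct and is exactly the paper's approach: the corollary is a direct application of the classification theorem for closed surfaces, using the compactness, connectedness, smoothness, and non-orientability of $\widetilde{\Upsilon}(\mathbb{R})$ established just above. The paper states this in a single sentence; you have simply spelled out the verification of the hypotheses, which is a reasonable elaboration.
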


\begin{proof} This follows from the theorem of classification of surfaces.
\end{proof}

\begin{Lemma}
   The surface $M$ is non-orientable and homeomorphic to $N_{k-48}$.
\end{Lemma}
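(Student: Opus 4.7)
The plan is to combine three ingredients already in place: the decomposition $\widetilde{\Upsilon}(\mathbb{R})\cong M\#\Sigma_{24}$ from the preceding lemma, the non-orientability of $\widetilde{\Upsilon}(\mathbb{R})\cong N_k$ proven just above, and the classification theorem for compact surfaces together with an Euler-characteristic computation.

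First, I would deduce that $M$ is non-orientable from a purely topological contrapositive: the connected sum of two orientable closed surfaces is again orientable, and $\Sigma_{24}$ is orientable, so if $M$ were orientable then $M\#\Sigma_{24}$ would be orientable, contradicting the fact that $\widetilde{\Upsilon}(\mathbb{R})\cong N_k$ is not orientable. Since $M$ is a compact connected surface by its construction in the first lemma of this appendix (each local Morse surgery at a double point cannot disconnect the path-connected $\Upsilon(\mathbb{R})$ globally, and the caps attached are each joined to the ambient surface), the classification of closed surfaces forces $M\cong N_j$ for a unique $j\geq 1$.

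Second, I would determine $j$ by comparing Euler characteristics on both sides of $N_k \cong M\#\Sigma_{24}$. Using the standard formulas $\chi(N_m)=2-m$, $\chi(\Sigma_g)=2-2g$, and the additivity $\chi(A\#B)=\chi(A)+\chi(B)-2$, one obtains
\[
2-k \;=\; \chi(\widetilde{\Upsilon}(\mathbb{R})) \;=\; \chi(M)+\chi(\Sigma_{24})-2 \;=\; (2-j)+(-46)-2 \;=\; -46-j,
\]
which rearranges to $j=k-48$. Hence $M\cong N_{k-48}$.

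The only point that needs care is the connectedness of $M$, since the classification of surfaces is usually stated for connected surfaces; once this is checked, the rest of the proof is formal, relying solely on the stability of orientability under connected sum and the additivity of the Euler characteristic. The Euler-characteristic computation itself is a short arithmetic check and not an obstacle.
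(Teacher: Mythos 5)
Your proof is correct and follows essentially the same route as the paper: deduce non-orientability of $M$ by contrapositive from $\widetilde{\Upsilon}(\mathbb{R})\cong M\#\Sigma_{24}$, then identify $M$ by the classification of compact surfaces. The only cosmetic difference is that you pin down the index via Euler-characteristic additivity $\chi(A\#B)=\chi(A)+\chi(B)-2$, whereas the paper directly cites the classification identity $N_r\#\Sigma_{24}\cong N_{48+r}$; these amount to the same computation.
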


\begin{proof}
    Since $\widetilde{\Upsilon}(\mathbb{R})$ is non orientable and homeomorphic to the connected sum of $M$ with $\Sigma_{24}$, then if $M$ would have been orientable, then so would $\widetilde{\Upsilon}(\mathbb{R})$. Thus $M$ is homeomorphic to $N_r$ for some $r$. Thus the connected sum of $N_{r}$ and $\Sigma_{24}$ is homeomorphic $N_{48+r}$, but is homeomorphic to $\widetilde{\Upsilon}(\mathbb{R})$, which itself is homeomorphic to $N_{k}$. Thus $k=48+r$ and the result follows.
\end{proof}

\begin{Lemma} \label{prop:k}  Let $\chi(\Upsilon(\mathbb{R}))$ be the topological Euler characteristic of $\Upsilon(\mathbb{R})$. Then we have  $k=26 - \chi(\Upsilon(\mathbb{R}))$.
\end{Lemma}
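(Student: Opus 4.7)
The plan is to compute $\chi(\widetilde{\Upsilon}(\mathbb{R}))$ in two different ways and equate them, using the structure results established in the previous lemmas.

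First I would extract two Euler characteristic identities from the earlier construction. On the one hand, the previous lemma shows that $\widetilde{\Upsilon}(\mathbb{R}) \cong N_k$, for which the classification of surfaces gives $\chi(N_k) = 2-k$. On the other hand, $\widetilde{\Upsilon}(\mathbb{R})$ was exhibited as the connected sum $M \# \Sigma_{24}$, and the standard formula for Euler characteristics of connected sums of closed surfaces, $\chi(A \# B) = \chi(A) + \chi(B) - 2$, together with $\chi(\Sigma_{24}) = 2 - 2\cdot 24 = -46$, yields $\chi(\widetilde{\Upsilon}(\mathbb{R})) = \chi(M) - 48$.

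Next I would relate $\chi(M)$ to $\chi(\Upsilon(\mathbb{R}))$ using the description of $\Upsilon(\mathbb{R})$ as $M$ with $24$ pairs of points identified. As noted in the proof of the first lemma of this appendix, identifying a pair of points is homotopically equivalent to attaching a $1$-cell, so $\Upsilon(\mathbb{R})$ is homotopy equivalent to $M \vee \bigvee^{24} S^1$. Using $\chi(X \vee Y) = \chi(X) + \chi(Y) - 1$ and $\chi(S^1)=0$, this gives $\chi(\Upsilon(\mathbb{R})) = \chi(M) - 24$, i.e.\ $\chi(M) = \chi(\Upsilon(\mathbb{R})) + 24$.

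Substituting into the connected-sum computation above gives $\chi(\widetilde{\Upsilon}(\mathbb{R})) = \chi(\Upsilon(\mathbb{R})) + 24 - 48 = \chi(\Upsilon(\mathbb{R})) - 24$, and comparing with $\chi(\widetilde{\Upsilon}(\mathbb{R})) = 2-k$ yields $k = 26 - \chi(\Upsilon(\mathbb{R}))$, as desired. There is no real obstacle here: everything reduces to bookkeeping with Euler characteristics once the previous lemmas are in hand. The only point deserving a line of justification is the homotopy equivalence $\Upsilon(\mathbb{R}) \simeq M \vee \bigvee^{24} S^1$, which one can either quote from the earlier lemma or re-derive by observing that collapsing a contractible arc joining the two identified points does not alter the homotopy type.
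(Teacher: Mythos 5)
Your proof is correct and reaches the same conclusion by essentially the same Euler characteristic bookkeeping. The only organizational difference is that the paper invokes the preceding lemma $M \cong N_{k-48}$ directly to write $\chi(M) = 50-k$ and then applies $\chi(\Upsilon(\mathbb{R})) = \chi(M) - 24$, whereas you compute $\chi(\widetilde{\Upsilon}(\mathbb{R}))$ two ways (via $N_k$ and via the connected sum $M \# \Sigma_{24}$) and then relate $\chi(M)$ to $\chi(\Upsilon(\mathbb{R}))$ through the CW attachment; since $M \cong N_{k-48}$ was itself derived from exactly those facts, the two routes are equivalent, and both hinge on the same key relation $\chi(\Upsilon(\mathbb{R})) = \chi(M) - 24$.
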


\begin{proof} Since $\Upsilon(\mathbb{R})$ is constructed by attaching $24$ CW $1$-cells to $M$, we get that $\chi(\Upsilon(\mathbb{R}))=\chi(M) -24$. But since $M$ is homeomorphic to $N_{k-48}$, we have that $\chi(M)=2-(k-48)=50-k$. Thus $\chi(\Upsilon(\mathbb{R})) = (50-k)-24=26-k$ which gives us that $k=26 - \chi(\Upsilon(\mathbb{R}))$. 
\end{proof}

We thus immediately obtain the following corollary.

\begin{Corollary} The fundamental group  $\pi_1(\Upsilon(\mathbb{R}))$ is isomorphic to the free product of $\mathbb F_{24}$ and $\pi_1(N_{k-48})$. 
\end{Corollary}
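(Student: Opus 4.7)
The plan is to assemble this corollary essentially by concatenating two previously established facts, so the work is purely bookkeeping rather than genuinely new.

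First, I would invoke the first lemma of this appendix, which constructs a compact differential surface $M$ from $\Upsilon(\mathbb{R})$ by replacing each of the $24$ ordinary double points with a pair of smooth points, and shows via Van Kampen applied to the resulting wedge decomposition $\Upsilon(\mathbb{R}) \simeq M \vee \bigvee^{24} S^{1}$ that
\[
\pi_1(\Upsilon(\mathbb{R})) \;\cong\; \pi_1(M) * \mathbb{F}_{24}.
\]
Next, I would substitute the identification of $M$ provided by the lemma just before Lemma \ref{prop:k}, namely that $M$ is a non-orientable closed surface homeomorphic to $N_{k-48}$ (where $k$ is the genus of the non-orientable surface $N_k$ to which $\widetilde{\Upsilon}(\mathbb{R})$ is homeomorphic, according to the preceding corollary, with $k = 26 - \chi(\Upsilon(\mathbb{R}))$ by Lemma \ref{prop:k}).

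Plugging in $\pi_1(M) = \pi_1(N_{k-48})$ into the free product decomposition above immediately yields
\[
\pi_1(\Upsilon(\mathbb{R})) \;\cong\; \pi_1(N_{k-48}) * \mathbb{F}_{24},
\]
which is the claim of the corollary. There is no real obstacle here: the only conceptual ingredient is that free product decompositions from Van Kampen are functorial in the wedge summands, so replacing $M$ by a homeomorphic model does not affect the statement. If desired, one could additionally spell out $\pi_1(N_{k-48})$ by its standard presentation $\langle a_1, \ldots, a_{k-48} \mid a_1^2 \cdots a_{k-48}^2 \rangle$, but this is optional given that the statement is phrased at the level of $\pi_1(N_{k-48})$ itself.
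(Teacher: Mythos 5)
Your proposal is correct and matches the paper's (implicit) argument exactly: the corollary is the immediate concatenation of the first lemma of the appendix, which gives $\pi_1(\Upsilon(\mathbb{R})) \cong \pi_1(M) * \mathbb{F}_{24}$, with the lemma identifying $M$ with $N_{k-48}$.
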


We describe a way to compute the Euler characteristics $\chi(\Upsilon(\mathbb{R}))$ and $\chi(V(\mathbb{R}))$ in terms of Milnor numbers of polynomials. We recall a result by Bruce from~\cite{bruce}. 
 
\begin{Proposition}[Proposition 7 in~\cite{bruce}]\label{prop:bruce}
    Let $f_1, \dots, f_r \colon \mathbb{R}^n \to \mathbb{R}$ be polynomials of degree $\leq d$ and suppose $X=V(f_1, \dots, f_r)$ is compact in the Euclidean topology of $\mathbb{R}^n$. Then  $$\chi(X) = \frac{(-1)^n- \mu(H)}{2}$$ where $\chi(X)$ is the topological Euler characteristic of $X$ and $H \colon \mathbb{R}^{n+1} \to \mathbb{R}$ is defined to be 
    \begin{equation*}
        H(x_1,\dots,x_n,y) = \left(\sum_{i=1}^r y^{d+1} f\left(\frac{x_1}{y}, \dots, \frac{x_n}{y}\right)\right) - y^{2d+4} - x_1^{2d+4} - \cdots -x_n^{2d+4},
    \end{equation*}
    and by $\mu(H)$ we mean the real Milnor number of $H$ at the origin, i.e., the dimension of $$
    \frac{\mathbb{R} [[y,x_1, \dots, x_n]]}{\langle \frac{\partial H}{\partial y}, \frac{\partial H}{\partial x_1}, \dots, \frac{\partial H}{\partial x_n} \rangle}
    $$ as a real vector space, where $\mathbb{R} [[x_1, \dots, x_n]]$ is the ring of real analytic functions in variables $x_1, \dots, x_n$. 
\end{Proposition}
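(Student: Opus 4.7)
The plan is to follow Khimshiashvili's classical strategy for relating the Euler characteristic of a real algebraic set to a local Milnor number, exploiting the fact that the specific compactifying construction of $H$ is engineered so that computing $\chi(X)$ reduces to a local computation at a single isolated critical point of $H$ at the origin.

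The first step is geometric. I would show that the compactifying summands $-y^{2d+4}-\sum_i x_i^{2d+4}$, combined with the (weighted) homogenizations $y^{d+1}f_i(x/y)$, guarantee two things: the origin is an isolated critical point of $H$ (so that $\mu(H)<\infty$ and the local algebra in the statement makes sense), and the local picture of $H$ near $0$ faithfully records the global picture of $X\subset\mathbb{R}^n$. Concretely, for $0<\delta\ll\varepsilon\ll 1$, the bounded part of the real Milnor fibre $H^{-1}(\delta)\cap B_\varepsilon(0)$ should be homotopy equivalent to $X$. The homogenization by $y$ is precisely what allows a subset of $\mathbb{R}^n$ to be recovered from a level set of a function on $\mathbb{R}^{n+1}$, and compactness of $X$ together with the sign of the barrier polynomials of high even degree rules out escape to infinity along the gradient flow of $H$.

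The second step is to invoke Khimshiashvili's formula. For a real analytic germ $H\colon(\mathbb{R}^{n+1},0)\to(\mathbb{R},0)$ with isolated critical point at the origin, one has
$$\chi\bigl(H^{-1}(\delta)\cap B_\varepsilon(0)\bigr)=1-\operatorname{sign}(\delta)^{n+1}\deg_0(\nabla H)$$
for $0<\delta\ll\varepsilon\ll 1$, and by the Eisenbud--Levine--Khimshiashvili theorem the topological degree $\deg_0(\nabla H)$ equals the signature of a symmetric bilinear form on the local algebra $\mathbb{R}[[x,y]]/\langle\partial H/\partial x_1,\dots,\partial H/\partial x_n,\partial H/\partial y\rangle$, whose $\mathbb{R}$-dimension is by definition $\mu(H)$. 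The particular structure of $H$---a perturbation dominated, outside a neighbourhood of $0$, by the strictly negative form $-y^{2d+4}-\sum x_i^{2d+4}$ of large even degree---forces the ELK form to be (essentially) sign-definite, so that its signature has absolute value $\mu(H)$ with a sign determined by the parity of $n$. Combining this identification with the formula of the previous step and bookkeeping the signs yields $\chi(X)=\frac{(-1)^n-\mu(H)}{2}$.

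The main obstacle is the geometric step: rigorously showing that the bounded component of $H^{-1}(\delta)$ retracts onto $X$. This requires a careful analysis of the Morse-theoretic behaviour of $H$, both near infinity (where one must use the coercivity coming from the barrier terms in $x_i^{2d+4}$ and $y^{2d+4}$ to confine the flow) and near the hypersurface $y=0$ (where one must understand how the homogenization degenerates). The algebraic matching of $\deg_0(\nabla H)$ with $\pm\mu(H)$ is comparatively routine given the explicit form of $H$, once the appropriate filtration on the local ring is identified.
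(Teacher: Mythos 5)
The paper does not actually prove this proposition; it quotes it as Proposition 7 of Bruce \cite{bruce} and uses it as a black box, so there is no internal proof to compare against. Your overall framework --- reduce $\chi(X)$ to the topology of a local Milnor fibre of $H$ at the origin and then invoke Khimshiashvili's formula --- is indeed Bruce's strategy, so the outline is on the right track.

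There is, however, a genuine gap at the step where you assert that "the particular structure of $H$ ... forces the ELK form to be (essentially) sign-definite, so that its signature has absolute value $\mu(H)$." Nothing in the construction forces the Eisenbud--Levine--Khimshiashvili bilinear form to be definite on the whole local algebra; in general one only has $\lvert\deg_0(\nabla H)\rvert \le \dim_{\mathbb R}\mathcal{A} = \mu(H)$ together with a parity congruence, and the barrier terms produce neither a strict local maximum nor a strict local minimum at $0$ (the homogenised part dominates the barrier near the origin), so the degree is not $\pm 1$ either. This attempted identification of the signature with the dimension is what breaks the argument, and it also points to an imprecision in the statement you were handed: Bruce's Proposition 7 gives $\chi(X) = \tfrac{1}{2}\bigl((-1)^n - \deg_0(\nabla H)\bigr)$ with the \emph{local topological degree} of the gradient germ, not the Milnor number, and the non-barrier part of $H$ should be the homogenisation of $\sum_i f_i^2$, i.e.\ $\sum_i \bigl(y^{d+1} f_i(x/y)\bigr)^2$, which is consistent with the $H_\Upsilon$ and $H_V$ written explicitly later in the appendix but not with the un-squared $\sum_i y^{d+1}f_i(x/y)$ in the displayed formula. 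Note that without the squares the geometric step would also fail, since $\sum_i y^{d+1} f_i(x/y)$ vanishes on the cone over $\{\sum f_i = 0\}$ rather than on the cone over $X$. With the degree in place of $\mu(H)$, Khimshiashvili's formula already yields the result once the Milnor-fibre comparison is established, and the ELK theorem is then a means of \emph{computing} that degree as a signature, not a device for replacing it by a dimension.
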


In order to compute $\chi(\Upsilon(\mathbb{R}))$ and $\chi(V(\mathbb{R}))$, we can apply Bruce's formula to equations (\ref{eq:cuboids}) and (\ref{eq:face_cuboids}) respectively. 

Combining Proposition~\ref{prop:k} and Proposition~\ref{prop:bruce} we obtain :
\begin{equation} \label{eq:k} k = 26 - \chi(\Upsilon(\mathbb{R})) = 26 - \frac{(-1)^n- \mu(H_{\Upsilon})}{2} .
\end{equation}

\begin{Proposition}  $\pi_{1}(\Upsilon(\mathbb{R})) = \pi_1(N_{k-48}) * \mathbb F_{24}\;\;\text{and}\;
\pi_{1}(\widetilde{\Upsilon}(\mathbb{R})) = \pi_1(N_k)$, where 
\begin{equation} \label{eq:k-final} k = 26 - \chi(\Upsilon(\mathbb{R})) = 26 - \frac{(-1)^n- \mu(H_{\Upsilon})}{2} \in \mathbb{N}. 
\end{equation}
where $H_{\Upsilon}\in \mathbb{R}[A,B,C,X,Y,Z,D]$ is the polynomial defined as
\begin{multline*}
H_{\Upsilon}(A,B,C,X,Y,Z,D)= D^2((A^2 + B^2 - Z^2)^2 + (B^2 + C^2 - X^2)^2 
\\ + (C^2 + A^2 - Y^2)^2 + (A^2 + B^2 + C^2 - D^2)^2) - A^8 - B^8 -C^8 -X^8 - Y^8 -Z^8 -D^8.
\end{multline*}
\end{Proposition}

Concerning $V(\mathbb{R})$ and $\widetilde{V}(\mathbb{R})$, the same proof works. Let us summarize it below.

\begin{Proposition}\label{prop:k-prime}  $\pi_{1}(V(\mathbb{R})) = \pi_1(N_{k'-32}) * \mathbb F_{16}$ and $\pi_{1}(\widetilde{V}(\mathbb{R})) = \pi_1(N_{k'})$, where $\mathbb F_{16}$ is the free group on $16$ generators and 

\begin{equation}\label{eq:k'} k'=18 - \chi(V(\mathbb{R})) = 18 -  \frac{(-1)^{n} - \mu(H_{V})}{2}
\end{equation}

\begin{equation*}H_{V}(A,B,C,X,Y,D)= H_{\Upsilon}(A,B,C,X,Y,0,D) \in \mathbb{R}[A,B,C,X,Y,D]
\end{equation*}

\end{Proposition}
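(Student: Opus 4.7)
The plan is to adapt step by step the chain of lemmas already established for $\Upsilon(\mathbb{R})$ and $\widetilde{\Upsilon}(\mathbb{R})$, replacing the $24$ real ordinary double points of $\Upsilon$ by the $16$ ordinary double points of $V$ and adjusting the Euler-characteristic bookkeeping accordingly. Each of the three building blocks---the surgery construction of an auxiliary compact surface, the identification of the resolution as a connected sum, and the cohomological proof of non-orientability---transfers essentially verbatim.

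First I would verify the three preliminary observations in the $V(\mathbb{R})$ setting. Path-connectedness follows from the Morse-lemma argument of Lemma~\ref{pc} applied to the complete intersection $V$. For compactness, the equation $A^2+X^2=U^2$ together with $B^2+C^2=X^2$ and $A^2+C^2=Y^2$ forces $U\neq 0$ on any nonzero real solution---indeed $U=0$ successively yields $A=X=0$, then $B=C=0$, then $Y=0$---so one may dehomogenize by $U=1$ and obtain $A^2+X^2=1$, which bounds every coordinate in $\mathbb{R}^5$. The reality of all $16$ singular points of $V$ is read off from the defining equations, as in the analysis underlying \cite[4.1 p.51]{VL}.

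Next I would run the three-stage surgery argument. Smoothing each real ordinary double point by replacing its Morse neighborhood $V(x^2+y^2-z^2)\cap\mathrm{Ball}_1(0)$ with two disjoint disks produces a compact differentiable surface $M'$, and $V(\mathbb{R})$ is recovered from $M'$ by identifying $16$ pairs of points; Van Kampen then gives $\pi_1(V(\mathbb{R}))\simeq\pi_1(M')*\mathbb{F}_{16}$. The same local blow-up model realizes $\widetilde{V}(\mathbb{R})$ as the connected sum of $M'$ with $\Sigma_{16}$, and the cohomological induction on successive blow-ups used for $\widetilde{\Upsilon}(\mathbb{R})$ goes through unchanged---the exceptional divisor of each blow-up is still $\mathbb{P}^1_{\mathbb{R}}$---so $\widetilde{V}(\mathbb{R})$ is non-orientable. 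By the classification of surfaces, $\widetilde{V}(\mathbb{R})\simeq N_{k'}$ and therefore $M'\simeq N_{k'-32}$.

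Finally I would compute $k'$ via Euler characteristics. Attaching the $16$ CW $1$-cells to $M'$ gives $\chi(V(\mathbb{R}))=\chi(M')-16$, while $\chi(N_{k'-32})=2-(k'-32)=34-k'$, whence $k'=18-\chi(V(\mathbb{R}))$. Applying Bruce's proposition to the three polynomials in \eqref{eq:face cuboids} on the affine chart $U=1$ (so $n=5$) then yields the closed form $\chi(V(\mathbb{R}))=\tfrac{(-1)^n-\mu(H_V)}{2}$, with $H_V$ as in the statement. The main obstacle, more bookkeeping than geometry, is confirming the count of real ordinary double points and checking that the polynomial produced by Bruce's construction for the face-cuboid system indeed coincides with the claimed specialisation $H_V=H_\Upsilon|_{Z=0}$; once these are in place, every other step is a word-for-word transcription of the argument for $\Upsilon$.
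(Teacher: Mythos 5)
Your proposal follows the paper's own proof essentially verbatim: you perform the same surgery to produce $M'$ with $\pi_1(V(\mathbb{R}))\simeq\pi_1(M')*\mathbb{F}_{16}$, realize $\widetilde{V}(\mathbb{R})$ as the connected sum of $M'$ with $\Sigma_{16}$, invoke the same cohomological non-orientability argument, and run the identical Euler-characteristic bookkeeping to get $k'=18-\chi(V(\mathbb{R}))$ before applying Bruce's formula. The added details you supply (the $U\neq 0$ argument for compactness, the explicit $n=5$ count, and flagging that the claimed specialisation $H_V=H_\Upsilon|_{Z=0}$ should be checked against Bruce's construction) only make the argument more careful than the paper's, which dispatches the whole thing with ``the proof is similar.''
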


\begin{proof} The proof is similar to the proof for $\Upsilon$, except that $V(\mathbb{R})$ has 16 singularities which are ordinary double points. There exists, as in proposition 3.1, a compact differential surface $M'$ such that $\pi_{1}(V(\mathbb{R}))\simeq \pi_{1}(M') \ast \mathbb F_{16}$.

Now, $\widetilde{V}(\mathbb{R})$ is homeomorphic to the connected sum of $M'$ with $\Sigma_{16}$. Also, $\widetilde{V}(\mathbb{R})$ is non orientable as in Proposition 3.4, thus homeomorphic to $N_{k'}$ for some $k'$. Moreover, $M$ is also homeomorphic to some $N_{r'}$ for some $r'$, and thus $\widetilde{V}(\mathbb{R})$ is homeomorphic to $N_{32+r'}$, whence $k'=32+r'$. Whence $M'$ is homeomorphic to $N_{k'-32}$. Now since $V(\mathbb{R})$ is constructed by attaching $16$ CW $1$-cells to $M'$, we have $$\chi(V(\mathbb{R}))=\chi(M') - 16 = 2 - (k'-32) - 16 = 18 - k'.$$
\end{proof}

\end{document}